\numberwithin{equation}{section}
\newtheorem{Proposition}[equation]{Proposition}
\newtheorem{Lemma}[equation]{Lemma}
\newtheorem{Theorem}[equation]{Theorem}
\newtheorem{Corollary}[equation]{Corollary}
\theoremstyle{definition}  %% makes all of the theorem environments which follow appear in \rm
\newtheorem{Remark}[equation]{Remark}
\newtheorem{Example}[equation]{Example}
\newcommand\Comment[2][\relax]{\space\par\medskip\noindent%
   \fbox{\begin{minipage}{\textwidth}\textbf{Comment\ifx\relax#1\else---#1\fi}\newline%
        #2\end{minipage}}\medskip
}
\def\bs{\text{\boldmath$s$}}
\def\br{\text{\boldmath$r$}}
\def\b1{\text{\boldmath$1$}}
\def\bb{\text{\boldmath$b$}}
\def\m{\mathfrak{m}}
\newcommand{\Hom}{\operatorname{Hom}}
\newcommand{\End}{\operatorname{End}}
\newcommand{\Z}{\mathbb{Z}}
\newcommand{\K}{\mathbb{K}}
\newcommand{\F}{\mathbb{F}}
\newcommand{\0}{{\bar 0}}
\renewcommand{\1}{{\bar 1}}
\def\phi{{\varphi}}
\newcommand{\zc}{{\textsf{c}}}
\newcommand{\ze}{{\textsf{e}}}
\newcommand{\za}{{\textsf{a}}}
\newcommand{\zu}{{\textsf{u}}}
\newcommand{\la}{\lambda}
\newcommand{\La}{\Lambda}
\newcommand{\om}{\omega}
\newcommand{\Ind}{{\mathrm {Ind}}}
\def\rank{\mathop{\mathrm{ rank}}\nolimits}
\def\K{\mathbb K}
\newcommand{\Zig}{{\textsf{A}}}
\newcommand{\ZigB}{\tilde{\textsf{A}}}
\def\col{{\operatorname{col}}}
\def\b{\mathfrak{b}}
\def\k{\Bbbk}
\def\iso{\stackrel{\sim}{\longrightarrow}}
\def\Seq{\operatorname{Tri}}
\def\t{{\sf t}}
  \gdef\set#1{\mathinner{\lbrace\,{\mathcode`\|"8000%
  \let|\midvert #1}\,\rbrace}}
\def\midvert{\egroup\mid\bgroup}
\colorlet{darkgreen}{green!50!black}
\tikzset{dots/.style={very thick,loosely dotted},
         greendot/.style={fill,circle,color=darkgreen,inner sep=1.5pt,outer sep=0},
         blackdot/.style={fill,circle,color=black,inner sep=1.1pt,outer sep=0},
         graydot/.style={fill,circle,color=gray,inner sep=1.1pt,outer sep=0},
         reddot/.style={fill,circle,color=red,inner sep=1.1pt,outer sep=0},
         bluedot/.style={fill,circle,color=blue,inner sep=1.1pt,outer sep=0}
}
\def\greendot(#1,#2){\node[greendot] at(#1,#2){}}
\def\blackdot(#1,#2){\node[blackdot] at(#1,#2){}}
\def\graydot(#1,#2){\node[graydot] at(#1,#2){}}
\def\reddot(#1,#2){\node[reddot] at(#1,#2){}}
\def\bluedot(#1,#2){\node[bluedot] at(#1,#2){}}
\newenvironment{braid}{% sets defaults for the braid diagrams
  \begin{tikzpicture}[baseline=6mm,black,line width=.7pt, scale=0.32,
                      draw/.append style={rounded corners},
                      every node/.append style={font=\fontsize{5}{5}\selectfont}]%
  }{\end{tikzpicture}
}
\def\Grid(#1,#2){%  draws a coordinate grid inside a braid diagram
  \draw[very thin,gray,step=2mm] (0,0)grid(#1,#2);
  \draw[very thin,darkgreen,step=10mm] (0,0)grid(#1,#2);
}
\newcommand\Tableau[2][\relax]{
  \begin{tikzpicture}[scale=0.5,draw/.append style={thick,black}]
    \ifx\relax#1\relax%
    \else % shade the boxes in #1
      \foreach\box in {#1} { \filldraw[blue!30]\box+(-.5,-.5)rectangle++(.5,.5); }
    \fi
    \newcount\row\newcount\col
    \row=0
    \foreach \Row in {#2} {
       \col=1
       \foreach\k in \Row {
          \draw(\the\col,\the\row)+(-.5,-.5)rectangle++(.5,.5);
          \draw(\the\col,\the\row)node{\k};
          \global\advance\col by 1
       }
       \global\advance\row by -1
    }
  \end{tikzpicture}
}
\newcommand\YoungDiagram[2][\relax]{
  \begin{tikzpicture}[scale=0.5,draw/.append style={thick,black}]
    \ifx\relax#1\relax%
    \else % shade the boxes in #1
    \foreach\box in {#1} {
      \filldraw[blue!30]\box rectangle ++(1,1);
    }
    \fi
    \newcount\row
    \row=0
    \foreach \col in {#2} {
       \draw(1,\the\row)grid ++(\col,1);
       \global\advance\row by -1
    }
  \end{tikzpicture}
}
\newenvironment{Young}{\begingroup
       \def\vr{\vrule height0.89\hoogte width\dikte depth 0.2\hoogte}
       \def\fbox##1{\vbox{\offinterlineskip
                    \hrule height\dikte
                    \hbox to \breedte{\vr\hfill##1\hfill\vr}
                    \hrule height\dikte}}
       \vbox\bgroup \offinterlineskip \tabskip=-\dikte \lineskip=-\dikte
            \halign\bgroup &\fbox{##\unskip}\unskip  \crcr }
       {\egroup\egroup\endgroup}
\def\diagram#1{\relax\ifmmode\vcenter{\,\begin{Young}#1\end{Young}\,}\else%
              $\vcenter{\,\begin{Young}#1\end{Young}\,}$\fi}
\begin{document}

\title[On maximally symmetric subalgebras]{{\bf On maximally symmetric subalgebras}}

\author{\sc Alexander Kleshchev}
\address{Department of Mathematics\\ University of Oregon\\ Eugene\\ OR 97403, USA}
\email{klesh@uoregon.edu}

\subjclass[2020]{16G30, 20C30}

%\thanks{}
\thanks{The author was supported by the NSF grant DMS-2346684. }

\begin{abstract}
Let $\k$ be a characteristic zero PID, $S$ be a $\k$-algebra and $T\subseteq S$ be a full rank subalgebra. Suppose the algebra $T$ is symmetric. It is important to know when  $T$ is a {\em maximal symmetric subalgebra}  of $S$, i.e. no $\k$-subalgebra $C$ satisfying $T\subsetneq C\subseteq S$ is symmetric. In this note we establish a useful sufficient condition for this using a notion of a quasi-unit of an algebra. This condition is used to obtain an old and a new results on maximal symmetricity for generalized Schur algebras corresponding to certain Brauer tree algebras. 
The old result was used in our work with Evseev on RoCK blocks of symmetric groups. 
The new  result will be used in our forthcoming work on RoCK blocks of double covers of symmetric groups. 
\end{abstract}

\maketitle

\section{Introduction}

Let $\k$ be a commutative (unital) ring. 
All $\k$-algebras in this note are assumed to be unital and free of finite rank as $\k$-modules. 
A $\k$-algebra $A$ is called {\em symmetric} if it possesses a {\em symmetrizing form}, i.e. a linear form $\t$ %\in\Hom_\k(A, \k)$, 
such that the bilinear form 
$$
(a,b)_\t:=\t(ab) \qquad(a,b\in A)
$$ 
is a symmetric perfect pairing, which means $(a,b)_\t=(b,a)_\t$ for all $a,b$ and $A\to \Hom_\k(A,\k),\ a\mapsto (a,\cdot)_\t$ is an isomorphism of $\k$-modules. If a field $\F$ is a $\k$-module we extend scalars to get the $\F$-algebra $A_\F:=\F\otimes_\k A$. 
%For $a\in A$, we often write $a$ for the element $1_\F\otimes a\in A_\F$. 
If the $\k$-algebra $A$ is symmetric then the $\F$-algebra $A_\F$ is also symmetric.

From now on, unless otherwise stated, let $\k$ be a PID with fraction filed $\K$ of characteristic zero. 
%For a $\k$-algebra $A$, we extend scalars to get a $\K$-algebra $A_\K:=\K\otimes_\k A$ and a $(\k/\m)$-algebra $A_{\k/\m}:=(\k/\m)\otimes_\k A$ where $\m$ is a maximal ideal of $\k$. 
%$$(\cdot,\cdot)_\t:A\times A\to \k,\ (a,b)\mapsto \t(ab)$$ is a symmetric perfect pairing (i.e. $(a,b)_\t=(b,a)_\t$ and $A\to \Hom_\k(A,\k),\ a\mapsto (a,\cdot)_\t$ is an isomorphism of $\k$-modules). 
%From now on, $\k$ is assumed to be a PID with characteristic zero quotient filed $\K$. 
Let $S$ be a $\k$-algebra and suppose we have a full rank (unital) $\k$-subalgebra $T\subseteq S$. The full rank assumption means that we can identify $T_\K=S_\K$, so $S$ and $T$ are two $\k$-forms of the same finite dimensional $\K$-algebra. Suppose the $\k$-algebra $T$ is symmetric. Then the $\K$-algebra $T_\K$ is also symmetric. However, the $\k$-algebra $S$ might not be symmetric. 
In fact, it is important to know when  $T$ is a {\em maximal symmetric subalgebra} of $S$, i.e. no $\k$-subalgebra $C$ satisfying $T\subsetneq C\subseteq S$ is symmetric. 

Non-trivial examples of such maximally symmetric subalgebras $T\subseteq S$ are given by $D^A(n,d)\subseteq {}'D^A(n,d)$, where $D^A(n,d)$ is a Turner's double algebra \cite{Turner} and  ${}'D^A(n,d)$ is its divided power version, see \cite[Theorem 6.6]{EK1}. This maximal symmetricity was crucially used in \cite{EK2} to prove Turner's conjecture that RoCK blocks of symmetric groups of $p$-weight~$d$ are Morita equivalent to $D^{\Zig_{p-1}}(d,d)$ for the Brauer tree algebra $\Zig_{p-1}$, cf. Example~\ref{Ex1}. 

It might be difficult to prove that $T\subseteq S$ is a maximally symmetric subalgebra. In this note we develop the techniques of \cite[\S6.2]{EK1} to get a sufficient condition which allows us to get new important examples of such subalgebras. The  new examples will be used in \cite{KlSpinEKTwo} 
to prove that RoCK blocks of double covers of symmetric groups of $\bar p$-weight $d$ are Morita equivalent to generalized Schur algebras  corresponding to the Brauer tree algebras $\tilde\Zig_{(p-1)/2}$, cf. Example~\ref{Ex2}. 

Our sufficient condition for maximal symmetricity is based on the notion of a quasi-unit. 
Let $A$ be a unital ring. An element $\xi\in A$ will be called a {\em quasi-unit} if $\xi\in Az$ 
for a central $z\in A$ implies that  
$z$ is a unit.

For the following theorem, assume that the $\k$-algebra $S$ is non-negatively graded, i.e. $S=S^0\oplus S^1\oplus\dots\oplus S^N$ and $S^iS^j\subseteq S^{i+j}$; in particular, $S^0$ is a subalgebra. 
Let $T=T^0\oplus T^1\oplus\dots\oplus T^N$ be a {\em graded subalgebra} of $S$; in particular,  $T^i=T\cap S^i$ for all~$i$. We assume that $T$ is a full rank graded subalgebra, i.e. $\rank_\k T^i=\rank_\k S^i$ for all $i$. 
%An element $\xi\in S^0$ will be called {\em $(S,T)$-special} if the following two properties hold:
%\begin{enumerate}
%\item[{\rm (a)}] if $s\in S^N$ and $\xi s\neq 0$ then $s\in T^N$;
%\item[{\rm (b)}] if $\m$ is a maximal ideal of $\k$ and $1_{\k/\m}\otimes\xi=sz$ for $s,z\in S^0_{\k/\m}$ with $z$ central in $S^0_{\k/\m}$ then $z$ is invertible in $S^0_{\k/\m}$. 
%\end{enumerate}
We assume that $T$ possesses a {\em degree $-N$ symmetrizing form}, which means a symmetrizing form $\t$ such that $\t|_{T^i}=0$ unless $i=N$. For the intermediate subalgebra $C$ we do not assume that it is a graded subalgebra, so a priori we might have that $\sum_i (C\cap S^i)\neq C$. As for the symmetricity of $C$, we only assume that $C_{\k/\m}$ are  symmetric for all maximal ideals $\m$ of $\k$ (symmetrizing forms on the algebras $C_{\k/\m}$ a priori have nothing to do with the given symmetrizing form $\t$ on $T$). 
%With these assumptions, we have:

\vspace{2mm}
\noindent
{\bf Main Theorem.}
{\em
Let 
$\k$ be a PID with fraction field of characteristic zero, 
$S=S^0\oplus S^1\oplus\dots\oplus S^N$ be a graded $\k$-algebra and $T\subseteq S$ be a full rank graded $\k$-subalgebra with $T^0=S^0$. Suppose $T$ possesses a degree $-N$ symmetrizing form. Let\, $C$ be a $\k$-subalgebra of $S$ containing  $T$ and such that for every maximal ideal $\m$ of\, $\k$ the $(\k/\m)$-algebra $C_{\k/\m}$ is  symmetric. 
Suppose that  and there is $\xi\in S^0$ such that:
\begin{enumerate}
\item[{\rm (a)}] each $y\in S^N$ can be written in the form $y=y_1+y_2$ with $\xi y_1= 0$ and $y_2\in T^N$;
\item[{\rm (b)}] $1_{\k/\m}\otimes\xi$ is a quasi-unit in $S^0_{\k/\m}$ for every maximal ideal\, $\m$ of $\k$. 
\end{enumerate}
Then $C=T$. 
%If there exists $\xi\in S^0$ such that $s\in S^N\setminus T^N$ implies $\xi s= 0$ and $1_{\k/\m}\otimes\xi$ is a quasi-unit in $S^0_{\k/\m}$ for every for maximal ideal $\m$ of $\k$ then $C=T$. 
\iffalse{
Suppose that  and there exists $\xi\in S^0$ such that the following two properties hold:
\begin{enumerate}
\item[{\rm (a)}] if $y\in S^N$ and $\xi y\neq 0$ then $y\in T^N$;
\item[{\rm (b)}] if $\m$ is a maximal ideal of $\k$ and $1_{\k/\m}\otimes\xi=sz$ for $s,z\in S^0_{\k/\m}$ with $z$ central in $S^0_{\k/\m}$ then $z$ is invertible in $S^0_{\k/\m}$. 
%, $z\in S^0_{\k/\m}$ is central in the algebra $S^0_{\k/\m}$ and $1_{\k/\m}\otimes\xi\in  S^0_{\k/\m}z$ then $z$ is invertible in $S^0_{\k/\m}$. 
\end{enumerate}
Then $C=T$. 
}\fi
}
\vspace{2mm}

\section{Proof of the Main Theorem} 

\subsection{Reduction to the case $\k$ is a DVR}

We first note that it suffices to prove the Main Theorem in the case where $\k$ is a discrete valuation ring (DVR). % (i.e. a PID with exactly one non-zero maximal ideal). 
Indeed, we may assume that $\k$ is not a field, since in that case $T=S$ and the theorem is trivial. 
Now, for every maximal ideal $\m$ of $\k$ the localization $\k_\m$ is a DVR, and so by the DVR case of the Main Theorem, $\k_\m\otimes_\k C=\k_\m\otimes_\k T$. 
Since this is true for all $\m$ we deduce that $C=T$. 

So from now on we assume that $\k$ is a DVR with fraction field $\K$ of characteristic $0$ and the maximal ideal $(\pi)$; we denote  $\F:=\k/(\pi)$. 

\subsection{Some general remarks and notation}
Let $A$ be a $\k$-algebra (as usual unital and free of finite rank as a $\k$-module). We identify  
$A_\F=A/\pi A$, with $1_\F\otimes a$ corresponding to $a+\pi A$ for all $a\in A$.  Given an $A$-(bi)module $V$, we can extend scalars to get an $A_\K$-(bi)module $V_\K=\K\otimes _\k V$ and 
an $A_\F$-(bi)module $V_\F=\F\otimes _\k V=V/\pi V$.

\begin{Lemma} \label{L2} 
Let $I$ be a two-sided ideal of $A$ such that the $A/\pi A$-bimodules $A/\pi A$ and $I/\pi I$ are isomorphic. If there is $\xi\in I$ such that $\xi+\pi A$ is a quasi-unit in the ring $A/\pi A$ then $I=A$. 
\end{Lemma}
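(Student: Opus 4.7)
The plan is to use the bimodule isomorphism to produce a central element of $A/\pi A$ that simultaneously lifts to $I$ and divides $\xi+\pi A$; the quasi-unit hypothesis then forces this element to be a unit, after which Nakayama's lemma finishes the job.

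Fix a bimodule isomorphism $\phi\colon A/\pi A \to I/\pi I$, and let $\iota\colon I/\pi I \to A/\pi A$ denote the bimodule map induced by the inclusion $I\hookrightarrow A$ (well defined since $\pi I\subseteq\pi A$). Set $u:=\phi(1)\in I/\pi I$. Applying left- and right-linearity of $\phi$ to the identities $a\cdot 1=1\cdot a=a$ yields $\phi(a)=au=ua$ in $I/\pi I$ for every $a\in A/\pi A$, so the composition $\iota\circ\phi$ is an endomorphism of the regular $(A/\pi A)$-bimodule. Such endomorphisms are exactly multiplication by elements of $Z(A/\pi A)$, so $\iota\circ\phi$ is multiplication by the central element $z:=\iota(u)$. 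Crucially, any lift $u'\in I$ of $u$ along the surjection $I\twoheadrightarrow I/\pi I$ satisfies $u'+\pi A=z$, so $z$ lies in the image of $I$ inside $A/\pi A$.

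Next, writing $\xi+\pi I=\phi(\eta)=\eta u$ for $\eta:=\phi^{-1}(\xi+\pi I)$ and applying $\iota$ gives $\xi+\pi A=\eta z$ in $A/\pi A$. Thus $\xi+\pi A\in (A/\pi A)\cdot z$ with $z$ central, so the quasi-unit hypothesis forces $z$ to be a unit. Lifting an inverse of $z$ to some $v\in A$ yields $u'v\equiv 1\pmod{\pi A}$, whence $1\in I+\pi A$ and $A=I+\pi A$. Since $A/I$ is finitely generated over the local ring $\k$ with maximal ideal $(\pi)$, Nakayama's lemma yields $I=A$. The subtle point to watch for is the identification of the bimodule endomorphism ring of $A/\pi A$ with its center (guaranteeing centrality of $z$) together with the observation that $z$ is automatically the reduction of an element of $I$; once both are in hand, everything else is a formal combination of centrality with the quasi-unit and Nakayama inputs.
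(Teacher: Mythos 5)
Your proof is correct and follows essentially the same route as the paper: both arguments take $z$ to be the image in $A/\pi A$ of a preimage in $I$ of $\phi(1)$, observe that $z$ is central and divides $\xi+\pi A$ on the left (via surjectivity of $\phi$), invoke the quasi-unit hypothesis to conclude $z$ is a unit, deduce $I+\pi A=A$, and finish with Nakayama. The only difference is that you spell out the centrality of $z$ via the endomorphism-ring description of $\End(A/\pi A)$ as a bimodule, whereas the paper simply asserts it; the substance is identical.
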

\begin{proof}
Let  $\phi: A/\pi A\iso I/\pi I$
be an isomorphism of $A/\pi A$-bimodules. Let $z\in I$ be such that  $\phi(1_A+\pi A)=z+\pi I$. Then $z+\pi A$ is central in $A/\pi A$. Moreover, $z+\pi I$ generates $I/\pi I$ as a left $A/\pi A$-module, so 
$\xi+\pi I\in (A/\pi A)(z+\pi I)$, hence $\xi+\pi A\in (A/\pi A)(z+\pi A)$. By assumption, $\xi+\pi A$ is a quasi-unit in $A/\pi A$, so $z+\pi A$ is a unit in $A/\pi A$. It follows that $I+\pi A=A$. Hence $I=A$ by Nakayama's Lemma. 
\end{proof}

If $A=\bigoplus_{i\in \Z}A^i$ and $j\in \Z$, we denote 
$$A^{\geq j}:=\bigoplus_{i\geq j}A^i\quad\text{and}\quad A^{> j}:=\bigoplus_{i> j}A^i.
$$

\subsection{Proof of the Main Theorem for the case $\k$ is a DVR}
For $i\in\Z_{\geq 0}$, we define 
$$
C^{(i)}:= C\cap S^i\quad\text{and}\quad C^{(>i)}:=C\cap S^{>i}.
$$ 
(Since a priori $C$ is a not necessarily graded subalgebra of $S$, we cannot claim that $C=\bigoplus_i C^{(i)}$, so we avoid the notation $C^i$, $C^{>i}$, etc.)
Note that $C^{(i)}$ and $C^{(>i)}$ are pure $\k$-submodules of $C$. So we can identify $C^{(i)}_\F:=\F\otimes_\k C^{(i)}$ and 
$C^{(> i)}_\F:=\F\otimes_\k C^{(> i)}$ with $\F$-supspaces of $C_\F$. 

For every $i$, we have $T^i\subseteq C^{(i)}\subseteq S^i$, and so 
\begin{equation}\label{ERanks}
\rank_\k C^{(i)}=\rank_\k T^i=\rank_\k S^i.
\end{equation}
Taking into account that $S^0=T^0$, we also have
\begin{equation}\label{ESTC0}
T^0=C^{(0)}=S^0.
\end{equation}
This, in turn, implies
\begin{equation}\label{EDirect Sum}
C=C^{(0)}\oplus C^{(>0)}.
\end{equation}
Indeed, it is clear that the sum $C^{(0)}+ C^{(>0)}$ is direct. Moreover, given $c\in C$, we can write $c=s_0+s_{>0}$ with $s_0\in S^0=C^{(0)}$ and $s_{>0}\in  S^{>0}$. It now follows that  $s_{>0}\in  C^{(>0)}$. 

By assumption, there exists a degree $-N$ symmetrizing form $\t$ on $T$. The corresponding bilinear form $(\cdot,\cdot)_\t$ restricts to a perfect pairing between $T^j$ and $T^{N-j}$ for all $j=0,\dots,N$; in particular, $(\cdot,\cdot)_\t$ restricts to a perfect pairing between $T^0$ and $T^N$, so $\rank_\k T^0=\rank_\k T^N$, and (\ref{ERanks}), (\ref{ESTC0}) imply $\rank_\k C^{(0)}=\rank_\k C^{(N)}$, hence
\begin{equation}\label{EDimC0CN}
\dim C^{(0)}_\F=\dim C^{(N)}_\F.
\end{equation}
It is clear that $C^{(N)}$ is an ideal in $C$, so naturally a $C^{(0)}$-bimodule. Extending scalars,  $C^{(N)}_\F$ is a $C^{(0)}_\F$-bimodule. 

\vspace{3mm}
\noindent
{\sf Claim 1:}  The $C^{(0)}_\F$-bimodule $C^{(N)}_\F$ is isomorphic to the dual bimodule $(C^{(0)}_\F)^*$. 

\vspace{1mm}
\noindent Indeed, $C_\F$ is symmetric by assumption, so 
it has a symmetrizing form $\t^C\in C_\F^*$. So the bilinear form on $C_\F$ defined as 
$$(c,c')_{C}:=\t^C(cc')\qquad(c,c'\in C_\F)$$ 
is symmetric and non-degenerate. We consider the orthogonal complement $(C^{(N)}_\F)^\perp$ of $C^{(N)}_\F$ in $C_\F$ with respect to $(\cdot,\cdot)_{C}$. Note that 
$C^{(>0)}_\F C^{(N)}_\F=0$, so 
$C^{(>0)}_\F\subseteq (C^{(N)}_\F)^\perp$. 
Moreover, by (\ref{EDirect Sum}), we have $C_\F=C^{(0)}_\F\oplus C^{(>0)}_\F$. 
By dimensions, taking into account (\ref{EDimC0CN}), we now deduce that $C^{(>0)}_\F= (C^{(N)}_\F)^\perp$ and that $(\cdot,\cdot)_{C}$ restricts to a perfect pairing between $C^{(0)}_\F$ and $C^{(N)}_\F$, which easily implies the claim.

\vspace{3mm}
Extending scalars from $\k$ to $\K$, we can identify $T_\K=C_\K=S_\K$ and consider $S, T,C$, etc. as sublattices in $S_\K$. The bilinear form $(\cdot,\cdot)_{\t}$ extends to the  bilinear form $(\cdot,\cdot)_{\t,\K}$ on $T_\K=S_\K$. 
Since $(\cdot,\cdot)_{\t}$ restricts to a perfect pairing between $T^0$ and $T^N$, taking into account (\ref{ESTC0}), we get 
\begin{equation}\label{ETDual}
S^0=C^{(0)}=T^0=\{x\in S_\K^0\mid (x,y)_{\t,\K}\in\k\ \text{for all $y\in T^N$}\}. 
\end{equation}
Then
\begin{eqnarray}\label{EPerfOpp2}
 T^N&=&\{y\in S_\K^N\mid (x,y)_{\t,\K}\in\k\ \text{for all $x\in S^0$}\}.
\end{eqnarray}
We set
\begin{align}
\label{ECDual}
\tilde C^{(0)}&:=\{x\in S_\K^0\mid (x,y)_{\t,\K}\in\k\ \text{for all $y\in C^{(N)}$}\},
\\
\tilde S^0&:=\{x\in S_\K^0\mid (x,y)_{\t,\K}\in\k\ \text{for all $y\in S^N$}\}.
\end{align}
Then
\begin{eqnarray}\label{EPerfOpp1}
 C^{(N)}&=&\{y\in S_\K^N\mid (x,y)_{\t,\K}\in\k\ \text{for all $x\in \tilde C^{(0)}$}\}.
\end{eqnarray}
Since $T^N\subseteq C^{(N)}\subseteq S^N$, we have a chain of $\k$-submodules 
\begin{equation}\label{E140624}
\tilde S^0\subseteq \tilde C^{(0)}\subseteq S^0.
\end{equation}

We claim that in fact $\tilde C^{(0)}$ is an $S^0$-subbimodule of $S^0$ (this is also true for $\tilde S^0$ by a similar argument but we will not need this). Indeed, let $x\in \tilde C^{(0)}$ and $s\in S^0$. By (\ref{ESTC0}), $s\in C^{(0)}$, and so for all $y\in C^{(N)}$, we have that $sy,ys\in C^{(N)}$, whence 
$$
(xs,y)_{\t,\K}=(x,sy)_{\t,\K}\in\k
$$
and
$$
(sx,y)_{\t,\K}=(y,sx)_{\t,\K}=(ys,x)_{\t,\K}=(x,ys)_{\t,\K}\in\k,
$$ 
proving that $sx,xs\in \tilde C^{(0)}$.

Since $S^0=C^{(0)}$ by (\ref{ESTC0}), we have that $C^{(N)}$ is an $S^0$-bimodule, and the pairing $(\cdot,\cdot)_{\t,\K}$, being associative, induces an isomorphism $\tilde C^{(0)}\cong (C^{(N)})^*$ 
of $S^0$-bimodules. 
Hence,
\begin{equation}\label{EBimIso}
\tilde C^{(0)}_\F\cong ((C^{(N)})^*)_\F\cong (C^{(N)}_\F)^*
\end{equation}
 as $S^0_\F$-bimodules. 
%The embedding $\tilde C^{(0)}\subseteq S^0$ can be considered as a homomorphism of $S^0$-bimodules. Extending scalars, this yields a (not necessarily injective) homomorphism $$\phi:\tilde C^{(0)}_\F\to S^0_\F$$ of $S^0_\F$-bimodules. 

Recall that by assumption we have an element $\xi\in S^0$
such that $1_{\F}\otimes\xi$ is a quasi-unit in $S^0_{\F}$. 
By (\ref{ESTC0}), we have $\xi\in T^0$. By the assumption (a) in the Main Theorem, if $y\in S^N$ then $y=y_1+y_2$ such that $\xi y_1=0$ and $y_2\in T^N$. So $(\xi,y)_{\t,\K}=(\xi,y_2)_{\t}\in\k$. We have proved that $\xi\in\tilde S^0$. By  (\ref{E140624}), we have 
$
\xi\in \tilde C^{(0)}.
$

Now, using (\ref{ESTC0}), Claim 1 and an isomorphism from (\ref{EBimIso}), we get
$$
S_\F^0=C^{(0)}_\F\cong (C^{(N)}_\F)^*\cong \tilde C^{(0)}_\F
$$
as $S_\F^0$-bimodules. Since we have by the previous paragraph that $
\xi\in \tilde C^{(0)}$, Lemma~\ref{L2} 
with $A=S^0$ and $I=\tilde C^{(0)}$ 
yields $\tilde C^{(0)}= S^0$. By (\ref{EPerfOpp2}) and (\ref{EPerfOpp1}), we deduce that $T^N= C^{(N)}$.  

Now suppose for a contradiction that $C\neq T$. Choose $x\in C\setminus T$ that lies in $S^{\geq j}$ with $j$ maximal possible. We can write $x=s_j+s_{j+1}+\dots+s_{N}$ with $s_i\in S^i$ for all $i$. By maximality of $j$, we have that $s_j\not\in T^j$. So we can write $s_j=c t$ for $c\in \K\setminus \k$ and $t\in T^j\setminus\pi T^j$. There exists $u\in T^{N-j}$ such that 
$
\t^T(tu)+(\pi)\neq 0
$ in $\k/(\pi)=\F$. Hence $tu\not\in\pi T^N$. Since $c\not\in\k$, we have $ctu\not\in T^N$. But $T^N=C^{(N)}$ by the previous paragraph, so $ctu\not\in C^{(N)}$. 
Moreover, $s_{j+1}u=\dots=s_Nu=0$ by degrees. Hence $xu=ctu\not\in C^{(N)}$. This is a contradiction since $x\in C$ and $u\in T^{N-j}\subseteq T\subseteq C$.

\section{Examples}
\subsection{A construction of quasi-units} 
\label{SSQU}
A key assumption in the Main Theorem is the existence of an interesting quasi-unit. In this subsection, we provide a necessary condition for an idempotent to be a quasi-unit.  

\begin{Proposition}\label{PIdQU} 
Let $A$ be a (unital) ring and let\, $1_A=e_0+e_1+\dots+e_k$ be an orthogonal idempotent decomposition. Suppose that for every $i=1,\dots,k$ we have:
\begin{enumerate}
\item[{\rm (i)}] Considering $e_iAe_0$ as an $(e_iAe_i,e_0Ae_0)$-bimodule, the natural map $e_iAe_i\to\End_{e_0Ae_0}(e_iAe_0)$ is an isomorphism. 
\item[{\rm (ii)}] There exists $a_i\in e_iAe_0$ such that $e_iAe_0=a_iAe_0$.
\end{enumerate}
Then $e_0$ is a quasi-unit in $A$.
\end{Proposition}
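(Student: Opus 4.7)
The plan is to show that any central $z \in A$ with $e_0 \in Az$, say $e_0 = bz$ for some $b \in A$, is automatically a unit in $A$, by inverting $z$ block by block along the given idempotent decomposition.

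First I exploit centrality to decompose $z$. Since $z e_i = e_i z$, we have $e_i z e_j = 0$ for $i \neq j$, so setting $z_i := e_i z e_i \in e_i A e_i$ gives $z = z_0 + z_1 + \cdots + z_k$ together with $z e_0 = e_0 z = z_0$. Multiplying $e_0 = bz$ on the right by $e_0$ produces $e_0 = b z_0$, and the analogous manipulation starting from the central relation $zb = e_0$ yields $e_0 = z_0 b$. Cutting $b$ to $b_0 := e_0 b e_0 \in e_0 A e_0$ and using $e_0 z_0 = z_0 e_0 = z_0$, both identities survive as $b_0 z_0 = z_0 b_0 = e_0$, so $z_0$ is a two-sided unit in the corner algebra $e_0 A e_0$ with inverse $b_0$.

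The main step is to show that each $z_i$ for $i \geq 1$ is a unit in $e_i A e_i$, and this is where condition (i) is essential. Restricting left and right multiplication by $z$ to the bimodule $e_i A e_0$, a direct computation using the decomposition $z = \sum_j z_j$ together with idempotent orthogonality shows that $L_z = L_{z_i}$ and $R_z = R_{z_0}$ there; by centrality of $z$ these two endomorphisms agree, giving $L_{z_i} = R_{z_0}$ as right $e_0 A e_0$-module endomorphisms of $e_i A e_0$. Since $z_0$ is a unit in $e_0 A e_0$, right multiplication by $z_0$ is a bijection of every right $e_0 A e_0$-module, so $R_{z_0}$ and hence $L_{z_i}$ is a bijective endomorphism of $e_i A e_0$; the set-theoretic inverse of a bijective module endomorphism is again a module endomorphism, so $L_{z_i}$ is a unit in $\End_{e_0 A e_0}(e_i A e_0)$. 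The ring isomorphism of condition~(i) then transports this to a two-sided inverse $w_i \in e_i A e_i$ for $z_i$.

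Finally I assemble a global inverse by setting $w := \sum_{i=0}^k w_i$, with $w_0 := b_0$. Since $w_i \in e_i A e_i$ and $z_j \in e_j A e_j$, the cross terms $w_i z_j$ with $i \neq j$ vanish, so $wz = \sum_i w_i z_i = \sum_i e_i = 1_A$ and, symmetrically, $zw = 1_A$; thus $z$ is a unit, as required. The main obstacle is the second step, which rests entirely on the identity $L_{z_i} = R_{z_0}$ extracted from centrality: once this is in hand, condition~(i) immediately converts bijectivity on $e_i A e_0$ into invertibility in $e_i A e_i$. I expect condition~(ii) to enter only indirectly, by providing a cyclic generator $a_i$ that makes condition~(i) tractable to verify in concrete applications, rather than in the abstract block-by-block inversion of $z$.
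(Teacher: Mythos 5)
Your proof is correct, and it takes a genuinely different route from the paper's. The paper sets $y_0:=z_0^{-1}\in e_0Ae_0$, invokes condition~(i) to produce $y_i\in e_iAe_i$ with $L_{y_i}=R_{y_0}$ on $e_iAe_0$, computes $z_iy_ia_i=a_i$, and then uses condition~(ii) (cyclicity of $e_iAe_0$ over $e_0Ae_0$ via $a_i$) together with the injectivity of the map in~(i) to upgrade this to $z_iy_i=e_i$; this only produces a one-sided inverse for $z$, which suffices because $z$ is central. You instead observe directly that centrality of $z$ forces $L_{z_i}=R_{z_0}$ on $e_iAe_0$; since $z_0$ is a unit (with inverse $b_0$, which is central in $e_0Ae_0$), this map is bijective, and a bijective module endomorphism has a module-endomorphism inverse, so $L_{z_i}$ is a unit in $\End_{e_0Ae_0}(e_iAe_0)$; condition~(i) being a ring isomorphism then hands you a two-sided inverse $w_i$ for $z_i$ in one stroke. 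Notably, your argument never appeals to condition~(ii), so you have in fact proved the slightly stronger statement that condition~(i) alone already implies $e_0$ is a quasi-unit. Your closing remark that (ii) ``enters only indirectly'' is therefore overly cautious: it does not enter at all, and it would be cleaner to record this explicitly as a strengthening of the proposition rather than as a hedge.
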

\begin{proof}
Suppose that $z\in A$ is central and $e_0\in Az$. We need to prove that $z$ is a unit. Note that $e_ize_j=ze_ie_j=0$ for $i\neq j$, so $z=z_0+z_1+\dots+z_k$ where $z_i=ze_i=e_iz$. 

Since  $e_0\in Az$, there exists $y_0\in A$ such that $e_0=y_0z$. Replacing $y_0$ with $e_0y_0e_0$ if necessary, we may assume that $y_0\in e_0Ae_0$. Now, $y_0$ is the inverse of the central element $z_0$ in the ring $e_0Ae_0$ (with unit $e_0$). In particular,  $y_0$ is central in the ring $e_0Ae_0$. Hence for every $i=1,\dots,k$, there is a right $e_0Ae_0$-module endomorphism of $e_iAe_0$ given by the right multiplication with $y_0$.  By (i), there is $y_i\in e_iAe_i$ such that this endomorphism is given by the left multiplication with $y_i$. In particular,
$
a_iy_0=y_ia_i$.
Therefore,
$$
z_iy_ia_i=z_ia_iy_0=za_iy_0=a_iy_0z=a_ie_0=a_i.
$$
Since the natural map in (i) is injective and $a_i$ generates $e_iAe_0$ as a right $e_0Ae_0$-module by (ii), the equality $z_iy_ia_i=a_i$ implies 
$z_iy_i=e_i$. Now, 
$$
z(y_0+y_1+\dots+y_k)=y_0z+z_1y_1+\dots+z_ky_k=e_0+e_1+\dots+e_k=1_A,
$$ 
so $z$ is a unit.
\end{proof}

\begin{Example} \label{Ex} 
{\rm 
Let $n\in\Z_{>0}$, $\k$ be an arbitrary commutative ring, and $M_n(\k)$ be the matrix algebra. For $d\in\Z_{\geq 0}$ the symmetric group $S_d$ acts on the algebra $M_n(\k)^{\otimes d}$, and the algebra of invariants  $S(n,d):=(M_n(\k)^{\otimes d})^{S_d}$ is the classical Schur algebra, see \cite{Green}. The algebra $S(n,d)$ comes with the orthogonal (weight) idempotent decomposition 
\begin{equation}\label{EId}
1_{S(n,d)}=\sum_{\la\in\La(n,d)}\xi_\la,
\end{equation}
where $\La(n,d)$ denotes the set of all compositions of $d$ with at most $n$ parts, see \cite[Section 3.2]{Green}. 
If $d\leq n$ then we have the special composition $\om:=(1,\dots,1,0,\dots,0)\in \La(n,d)$ (with $1$ repeated $d$ times) such that $\xi_\om S(n,d)\xi_\om\cong \k S_d$. Taking $e_0$ in Proposition~\ref{PIdQU} to be $\xi_\om$ (and $e_1,\dots,e_k$ to be the remaining $\xi_\la$), the assumption (i) the proposition comes from the Schur-Weyl duality \cite[(2.6c)]{Green}, and the assumption (ii) comes from the fact that $\xi_\la S(n,d)\xi_\om$, as a right module over 
$\k S_d\cong \xi_\om S(n,d)\xi_\om$, 
is the induced module $\Ind_{\k S_\la}^{\k S_d}\k$ from the trivial module $\k$. So 
if $d\leq n$ then $\xi_\om$ is a quasi-unit in $S(n,d)$. 
}
\end{Example}

More generally, let $A=A_\0\oplus A_\1$ be a $\k$-superalgebra and 
\begin{equation}\label{ESA}
S^A(n,d):=(M_n(A)^{\otimes d})^{S_d},
\end{equation}
with the action of $S_d$ on $M_n(A)^{\otimes d}$ taking into account the parity of elements, see \cite[(3.3),\,\S5.2]{EK1}. The algebra $S^A(n,d)$ comes with the weight idempotent decomposition identical to (\ref{EId}). 
The same argument as in Example~\ref{Ex} now recovers the following result proved in \cite[Lemma 5.18]{EK1}:

\begin{Corollary} \label{CXiOm} 
If $d\leq n$ then $\xi_\om$ is a quasi-unit in $S^A(n,d)$.
\end{Corollary}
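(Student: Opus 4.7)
The plan is to mimic Example~\ref{Ex} almost verbatim, replacing the matrix algebra $M_n(\k)$ by the matrix superalgebra $M_n(A)$ and using the super-analogue of Schur-Weyl duality in place of the classical one. Concretely, consider the orthogonal weight idempotent decomposition (\ref{EId}) of $1_{S^A(n,d)}$; set $e_0:=\xi_\om$, which makes sense since the assumption $d\le n$ guarantees $\om=(1,\dots,1,0,\dots,0)\in\La(n,d)$, and take $e_1,\dots,e_k$ to be the remaining $\xi_\la$ with $\la\ne \om$. Once conditions (i) and (ii) of Proposition~\ref{PIdQU} are verified for this decomposition, the proposition immediately yields that $\xi_\om$ is a quasi-unit in $S^A(n,d)$, which is the desired conclusion.

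To verify (i), one must show that the natural ring homomorphism
$$
\xi_\la S^A(n,d)\xi_\la \longrightarrow \End_{\xi_\om S^A(n,d)\xi_\om}\bigl(\xi_\la S^A(n,d)\xi_\om\bigr)
$$
is an isomorphism for each $\la\in\La(n,d)\setminus\{\om\}$. This is the super version of the Schur-Weyl double centralizer property \cite[(2.6c)]{Green} used in Example~\ref{Ex}; under the identification of $\xi_\om S^A(n,d)\xi_\om$ with the appropriate twisted version of $A^{\otimes d}\rtimes \k S_d$, it is precisely the statement recorded in \cite[\S5.2]{EK1}.

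For (ii), one has to exhibit an element $a_\la\in \xi_\la S^A(n,d)\xi_\om$ that generates $\xi_\la S^A(n,d)\xi_\om$ as a right $\xi_\om S^A(n,d)\xi_\om$-module. Just as in Example~\ref{Ex}, this right module is canonically isomorphic to an induced module from the ``Young subalgebra'' of $\xi_\om S^A(n,d)\xi_\om$ corresponding to the composition $\la$, and the image of the unit under this induction serves as $a_\la$; this identification is also carried out in \cite[\S5.2]{EK1}. The only genuine technical hurdle is confirming that these Schur-Weyl duality and induction statements survive the passage from $M_n(\k)$ to the matrix superalgebra $M_n(A)$ with its sign-twisted $S_d$-action, and this is exactly what \cite[\S5.2]{EK1} verifies. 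Applying Proposition~\ref{PIdQU} then completes the proof.
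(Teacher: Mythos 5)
Your proposal is correct and follows essentially the same route as the paper: apply Proposition~\ref{PIdQU} with $e_0=\xi_\om$, verifying (i) via the super version of Schur-Weyl duality and (ii) via the induced-module description of $\xi_\la S^A(n,d)\xi_\om$ over $\xi_\om S^A(n,d)\xi_\om$, exactly paralleling Example~\ref{Ex}. The paper's own proof is just the one-line remark that ``the same argument as in Example~\ref{Ex}'' applies (with a citation to \cite[Lemma 5.18]{EK1}); you have merely spelled out the details that remark leaves implicit.
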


\subsection{Some Turner's algebras} In this subsection, $\k$ is a PID with fraction filed $\K$ of characteristic zero. Let $A=A^0\oplus A^1\oplus A^2$ be a graded $\k$-algebra, with a degree $-2$ symmetrizing form $\t$, so we have a perfect symmetric pairing on $A$ given by $(a,b)_\t=\t(ab)$. Since the symmetrizing form $\t$ has degree $-2$, the bilinear form $(\cdot,\cdot)_\t$ restricts to a perfect pairing between $A^0$ and $A^2$. 
We consider $A$ as a superalgebra with $A_\0=A^0\oplus A^2$ and $A_\1=A^1$. 
Here are two key examples:

\begin{Example} \label{Ex1}
{\rm 
For all $\ell\in \Z_{>0}$, we define the graded $\k$-algebras $\Zig_\ell$. If $\ell =1$, we set $\Zig_1:= \Z[\zc_1]/(\zc_1^2)$, where $\zc_1$ is an indeterminate in degree $2$. For $\ell>1$, let $Q$ be the 
quiver 
\begin{align*}
\begin{braid}\tikzset{baseline=3mm}
\coordinate (1) at (0,0);
\coordinate (2) at (4,0);
\coordinate (3) at (8,0);
\coordinate (4) at (12,0);
\coordinate (6) at (16,0);
\coordinate (L1) at (20,0);
\coordinate (L) at (24,0);
\draw [thin, black,->,shorten <= 0.1cm, shorten >= 0.1cm]   (1) to[distance=1.5cm,out=100, in=100] (2);
\draw [thin,black,->,shorten <= 0.25cm, shorten >= 0.1cm]   (2) to[distance=1.5cm,out=-100, in=-80] (1);
\draw [thin,black,->,shorten <= 0.25cm, shorten >= 0.1cm]   (2) to[distance=1.5cm,out=80, in=100] (3);
\draw [thin,black,->,shorten <= 0.25cm, shorten >= 0.1cm]   (3) to[distance=1.5cm,out=-100, in=-80] (2);
\draw [thin,black,->,shorten <= 0.25cm, shorten >= 0.1cm]   (3) to[distance=1.5cm,out=80, in=100] (4);
\draw [thin,black,->,shorten <= 0.25cm, shorten >= 0.1cm]   (4) to[distance=1.5cm,out=-100, in=-80] (3);
\draw [thin,black,->,shorten <= 0.25cm, shorten >= 0.1cm]   (6) to[distance=1.5cm,out=80, in=100] (L1);
\draw [thin,black,->,shorten <= 0.25cm, shorten >= 0.1cm]   (L1) to[distance=1.5cm,out=-100, in=-80] (6);
\draw [thin,black,->,shorten <= 0.25cm, shorten >= 0.1cm]   (L1) to[distance=1.5cm,out=80, in=100] (L);
\draw [thin,black,->,shorten <= 0.1cm, shorten >= 0.1cm]   (L) to[distance=1.5cm,out=-100, in=-100] (L1);
\blackdot(0,0);
\blackdot(4,0);
\blackdot(8,0);
\blackdot(20,0);
\blackdot(24,0);
\draw(0,0) node[left]{$1$};
\draw(4,0) node[left]{$2$};
\draw(8,0) node[left]{$3$};
\draw(14,0) node {$\cdots$};
\draw(20,0) node[right]{$\ell-1$};
\draw(24,0) node[right]{$\ell$};
\draw(2,1.2) node[above]{$\za_{2,1}$};
\draw(6,1.2) node[above]{$\za_{3,2}$};
%\draw(10,1.2) node[above]{$\za_{4,3}$};
\draw(18,1.2) node[above]{$\za_{\ell-2,\ell-1}$};
\draw(22,1.2) node[above]{$\za_{\ell,\ell-1}$};
\draw(2,-1.2) node[below]{$\za_{1,2}$};
\draw(6,-1.2) node[below]{$\za_{2,3}$};
%\draw(10,-1.2) node[below]{$\za_{3,4}$};
\draw(18,-1.2) node[below]{$\za_{\ell-2,\ell-1}$};
\draw(22,-1.2) node[below]{$\za_{\ell-1,\ell}$};
\end{braid}
\end{align*}
Then $\Zig_\ell$ is the path algebra $\k Q$, generated by length $0$ paths $\ze_1,\dots,\ze_\ell$ and length $1$ paths $\za_{k,j}$, subject to the following relations:
\begin{enumerate}
\item All paths of length three or greater are zero.
\item All paths of length two that are not cycles are zero.
\item All cycles of length $2$ based at the same vertex are equal.
\end{enumerate}

The algebra $\Zig_\ell$ inherits the path length grading from $\k Q$ so that $\Zig_\ell=\Zig_\ell^0\oplus \Zig_\ell^1\oplus \Zig_\ell^2$. Setting $\zc_j:=\za_{j,j-1}\za_{j-1,j}$ or $\za_{j,j+1}\za_{j+1,j}$, we have that $\zc_1,\dots,\zc_\ell$ is a basis of  $\Zig_\ell^2$ (this statement is also true for the case $\ell=1$). Now the degree $-2$ symmetrizing form on $\Zig_\ell$ takes the value $1$ on each $\zc_1,\dots,\zc_\ell$ (and is zero on $\Zig_\ell^0\oplus \Zig_\ell^1$). 
}
\end{Example}

\begin{Example} \label{Ex2}
{\rm 
For all $\ell\in \Z_{>0}$, we define the graded $\k$-algebras $\ZigB_\ell$. Let $\tilde Q$ be the 
quiver 
\begin{align*}
\begin{braid}\tikzset{baseline=3mm}
\coordinate (1) at (0,0);
\coordinate (2) at (4,0);
\coordinate (3) at (8,0);
\coordinate (4) at (12,0);
\coordinate (6) at (16,0);
\coordinate (L1) at (20,0);
\coordinate (L) at (24,0);
\draw [thin, black, ->] (-0.3,0.2) arc (15:345:1cm);
\draw [thin, black,->,shorten <= 0.1cm, shorten >= 0.1cm]   (1) to[distance=1.5cm,out=100, in=100] (2);
\draw [thin,black,->,shorten <= 0.25cm, shorten >= 0.1cm]   (2) to[distance=1.5cm,out=-100, in=-80] (1);
\draw [thin,black,->,shorten <= 0.25cm, shorten >= 0.1cm]   (2) to[distance=1.5cm,out=80, in=100] (3);
\draw [thin,black,->,shorten <= 0.25cm, shorten >= 0.1cm]   (3) to[distance=1.5cm,out=-100, in=-80] (2);
\draw [thin,black,->,shorten <= 0.25cm, shorten >= 0.1cm]   (3) to[distance=1.5cm,out=80, in=100] (4);
\draw [thin,black,->,shorten <= 0.25cm, shorten >= 0.1cm]   (4) to[distance=1.5cm,out=-100, in=-80] (3);
\draw [thin,black,->,shorten <= 0.25cm, shorten >= 0.1cm]   (6) to[distance=1.5cm,out=80, in=100] (L1);
\draw [thin,black,->,shorten <= 0.25cm, shorten >= 0.1cm]   (L1) to[distance=1.5cm,out=-100, in=-80] (6);
\draw [thin,black,->,shorten <= 0.25cm, shorten >= 0.1cm]   (L1) to[distance=1.5cm,out=80, in=100] (L);
\draw [thin,black,->,shorten <= 0.1cm, shorten >= 0.1cm]   (L) to[distance=1.5cm,out=-100, in=-100] (L1);
\blackdot(0,0);
\blackdot(4,0);
\blackdot(8,0);
\blackdot(20,0);
\blackdot(24,0);
\draw(0,0) node[left]{$0$};
\draw(4,0) node[left]{$1$};
\draw(8,0) node[left]{$2$};
\draw(14,0) node {$\cdots$};
\draw(20,0) node[right]{$\ell-2$};
\draw(24,0) node[right]{$\ell-1$};
 \draw(-2.6,0) node{$\zu$};
\draw(2,1.2) node[above]{$\tilde \za_{1,0}$};
\draw(6,1.2) node[above]{$\tilde \za_{2,1}$};
\draw(10,1.2) node[above]{$\tilde \za_{3,2}$};
\draw(18,1.2) node[above]{$\tilde \za_{\ell-3,\ell-2}$};
\draw(22,1.2) node[above]{$\tilde \za_{\ell-1,\ell-2}$};
\draw(2,-1.2) node[below]{$\tilde \za_{0,1}$};
\draw(6,-1.2) node[below]{$\tilde \za_{1,2}$};
\draw(10,-1.2) node[below]{$\tilde \za_{2,3}$};
\draw(18,-1.2) node[below]{$\tilde \za_{\ell-3,\ell-2}$};
\draw(22,-1.2) node[below]{$\tilde \za_{\ell-2,\ell-1}$};
\end{braid}
\end{align*}
Then $\ZigB_\ell$ is the path algebra $\k \tilde Q$, generated by length $0$ paths $\tilde \ze_0,\dots,\tilde \ze_{\ell-1}$, and length $1$ paths $\zu$ and $\tilde \za_{k,j}$, subject to the following relations:
\begin{enumerate}
\item all paths of length three or greater are zero;
\item all paths of length two that are not cycles are zero;
\item the length-two cycles based at the vertex $i\in\{1,\dots,\ell-2\}$ are equal;
\item $\zu^2=\tilde \za_{0,1}\tilde \za_{1,0}$. 
\end{enumerate}
For example, $\ZigB_1$ is the truncated polynomial algebra $\F[\zu]/(\zu^3)$. 
The algebra $\ZigB_\ell$ inherits the path length grading from $\k \tilde Q$ so that $\ZigB_\ell=\ZigB_\ell^0\oplus \ZigB_\ell^1\oplus \ZigB_\ell^2$. Setting $\tilde \zc_0=\zu^2$ and 
$\tilde \zc_j:=\tilde \za_{j,j-1}\tilde \za_{j-1,j}$ for $j=1,\dots,\ell-1$, we have that $\tilde \zc_0,\dots,\tilde \zc_{\ell-1}$ is a basis of  $\ZigB_\ell^2$. Now the degree $-2$ symmetrizing form on $\ZigB_\ell$ takes the value $1$ on each $\tilde \zc_0,\dots,\tilde \zc_{\ell-1}$ (and is zero on $\ZigB_\ell^0\oplus \ZigB_\ell^1$). 
Note that the grading on $\ZigB_\ell$ used in  \cite{KL} is different from the one used here.
}
\end{Example}

Let $n\in\Z_{>0}$ and $d\in\Z_{\geq 0}$. 
As in (\ref{ESA}), we have the algebra $S^A(n,d)$, which inherits grading from $A$ so that 
$$
S^A(n,d)=S^A(n,d)^0\oplus\dots\oplus S^A(n,d)^{2d}.
$$
Taking ${\mathfrak a}=A^0$ and ${\mathfrak c}=A^2$ in the construction of \cite[\S3B]{KM2}, we obtain the {\em Turner's algebra} $T^A_{\mathfrak a}(n,d)$ which in this paper we denote simply $T^A(n,d)$. It arises as a full rank $\k$-subalgebra of $S^A(n,d)$ with 
\begin{equation}\label{ET0S0}
T^A(n,d)^0= S^A(n,d)^0\cong S^{A^0}(n,d). 
\end{equation}
Moreover, from \cite[Lemma 6.3, Corollary 6.7]{KM2} we have

\begin{Lemma} \label{PTSymm} 
The graded $\k$-algebra $T^A(n,d)$ possesses a degree $-2d$ symmetrizing form. 
\end{Lemma}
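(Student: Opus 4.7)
The plan is to lift the degree $-2$ symmetrizing form $\t$ on $A$ to a degree $-2d$ form on $M_n(A)^{\otimes d}$ and then descend it to the Turner subalgebra $T^A(n,d) \subseteq S^A(n,d)$ using the explicit construction of \cite[\S3B]{KM2}.

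First I would define $\hat\t \colon M_n(A) \to \k$ by $\hat\t(X) := \sum_{i=1}^n \t(X_{ii})$ for $X = (X_{ij}) \in M_n(A)$. Since $\t$ is a symmetrizing form on $A$ of degree $-2$ and the matrix slots carry degree $0$, $\hat\t$ is a symmetrizing form of degree $-2$ on the superalgebra $M_n(A)$: symmetry follows from $\sum_{i,j} \t(X_{ij} Y_{ji}) = \sum_{i,j} \t(Y_{ji} X_{ij})$, and non-degeneracy from non-degeneracy of $\t$ combined with that of the usual matrix trace form. Tensoring yields a symmetrizing form $\hat\t^{\otimes d}$ of degree $-2d$ on $M_n(A)^{\otimes d}$ which is invariant under the sign-twisted $S_d$-action described in \cite[(3.3),\,\S5.2]{EK1}.

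Next I would restrict $\hat\t^{\otimes d}$ to $T^A(n,d)$ through the inclusions $T^A(n,d) \subseteq S^A(n,d) \subseteq M_n(A)^{\otimes d}$. The restricted form $\t^T$ is automatically symmetric, has degree $-2d$ (since $\t$ annihilates $A^0 \oplus A^1$, the only tensors sent to something nonzero are those of total $A$-degree $2d$), and takes values in $\k$. The nontrivial point is non-degeneracy of $\t^T$ on the free $\k$-module $T^A(n,d)$.

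The main obstacle is proving this non-degeneracy integrally rather than only over the fraction field $\K$. Over $\K$ one can appeal to classical invariant theory and the symmetricity of $A_\K$. Over $\k$ one must exhibit dual $\k$-bases of $T^A(n,d)^j$ and $T^A(n,d)^{2d-j}$ whose pairing matrix is invertible over $\k$. The Turner subalgebra is designed precisely so that the natural averaging over $S_d$-orbits produces such a basis without introducing $d!$ in the denominator: this is \cite[Lemma 6.3]{KM2}, and the resulting perfect pairing between $T^A(n,d)^j$ and $T^A(n,d)^{2d-j}$ is \cite[Corollary 6.7]{KM2}. Combining these with the lift $\hat\t^{\otimes d}$ above completes the proof.
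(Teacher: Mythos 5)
Your proposal takes essentially the same route as the paper, which simply cites \cite[Lemma 6.3, Corollary 6.7]{KM2} for this statement: you supply a natural explicit candidate for the form (the trace lift $\hat\t^{\otimes d}$ restricted to $T^A(n,d)$) but, as you acknowledge, the crucial integral non-degeneracy rests on exactly those two references, correctly identified as the point where the divided-power structure of the Turner subalgebra enters. The one loose end is that you implicitly assume your restricted form coincides (up to unit) with the one proved perfect in \cite[Corollary 6.7]{KM2}; since the paper offers no independent argument either, this is a matter of matching against \cite{KM2} rather than a gap in the approach.
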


\begin{Theorem} \label{TTASA} 
Let $d\leq n$. Then the subalgebra $T^A(n,d)\subseteq S^A(n,d)$ is maximally symmetric.
\end{Theorem}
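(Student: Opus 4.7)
The plan is to apply the Main Theorem to the triple $S:=S^A(n,d)$, $T:=T^A(n,d)$, and grading length $N=2d$, with distinguished element $\xi:=\xi_\om\in S^0$ coming from the weight $\om=(1,\dots,1,0,\dots,0)\in\La(n,d)$, which is a valid weight because $d\leq n$. All structural hypotheses of the Main Theorem are already in place: by (\ref{ET0S0}) the inclusion $T\subseteq S$ is a full rank graded $\k$-subalgebra with $T^0=S^0$, and Lemma~\ref{PTSymm} supplies the required degree $-2d$ symmetrizing form on $T$. So it only remains to produce, for any intermediate subalgebra $C$ whose reductions $C_{\k/\m}$ are symmetric, the two properties (a), (b) of the Main Theorem for $\xi=\xi_\om$.

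Property (b) is essentially free. Since the formation of $S^A(n,d)^0\cong S^{A^0}(n,d)$ commutes with base change, the reduction $1_{\k/\m}\otimes \xi_\om$ is again the weight-$\om$ idempotent of the Schur-type algebra over $\k/\m$. Corollary~\ref{CXiOm} applies verbatim over any commutative base ring, so $1_{\k/\m}\otimes\xi_\om$ is a quasi-unit in $S^0_{\k/\m}$ for every maximal ideal $\m$.

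Property (a) is the heart of the argument. Given $y\in S^{2d}$, the orthogonal idempotent decomposition $1=\sum_{\la\in \La(n,d)}\xi_\la$ gives a tautological splitting
\[
y=(1-\xi_\om)y+\xi_\om y=:y_1+y_2,
\]
and the relation $\xi_\om(1-\xi_\om)=0$ forces $\xi_\om y_1=0$ automatically. Consequently (a) reduces to the inclusion $\xi_\om S^A(n,d)^{2d}\subseteq T^A(n,d)^{2d}$. This is a purely structural statement about the Turner construction of \cite[\S3B]{KM2}: the top-degree component $T^A(n,d)^{2d}$ is engineered so that its left-$\xi_\om$ component already agrees with that of $S^A(n,d)^{2d}$, because the defining restriction on matrix entries in the two factors is vacuous on the weight $\om$ row. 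I expect this verification to be the main obstacle, since it requires unpacking the precise definition from \cite{KM2}; the argument will essentially mirror the top-degree comparison carried out in the proof of \cite[Theorem~6.6]{EK1}.

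With both (a) and (b) verified, the Main Theorem yields $C=T^A(n,d)$ for every eligible intermediate subalgebra $C$, which is exactly the statement that $T^A(n,d)$ is a maximal symmetric subalgebra of $S^A(n,d)$.
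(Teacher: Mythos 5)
Your strategy coincides exactly with the paper's: invoke the Main Theorem with $S=S^A(n,d)$, $T=T^A(n,d)$, $N=2d$, $\xi=\xi_\om$, citing (\ref{ET0S0}) for $T^0=S^0$, Lemma~\ref{PTSymm} for the degree $-2d$ form, and Corollary~\ref{CXiOm} (applied over each residue field $\k/\m$, with $A^0_{\k/\m}$ playing the role of the input superalgebra) for hypothesis~(b). Your reformulation of (a) via the idempotent splitting $y=(1-\xi_\om)y+\xi_\om y$, which reduces it to the inclusion $\xi_\om S^A(n,d)^{2d}\subseteq T^A(n,d)^{2d}$, is also sound and a bit cleaner than the statement of~(a).

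However, you stop precisely where the work begins. You write that you ``expect this verification to be the main obstacle'' and that the inclusion should hold ``because the defining restriction on matrix entries \dots is vacuous on the weight $\om$ row,'' deferring to an analogy with \cite[Theorem~6.6]{EK1}. That is a gesture, not an argument. The paper actually carries out the check using the combinatorics of \cite{KM2}: by \cite[Lemma 3.3]{KM2}, $S^A(n,d)$ has a $\k$-basis of elements $\xi_{\br,\bs}^{\bb}$, and one computes that $\xi_\om\xi_{\br,\bs}^{\bb}\neq 0$ forces $(r_1,\dots,r_d)$ to be a permutation of $(1,\dots,d)$ (hence all $r_i$ distinct), which by \cite[(3.9)]{KM2} forces $\xi_{\br,\bs}^{\bb}=\eta_{\br,\bs}^{\bb}$, an element of $T^A(n,d)$. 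Only that identification of the surviving basis vectors with elements of the Turner subalgebra makes hypothesis~(a) true, and your proposal does not supply it. So the approach is the right one, but the key computation is missing.
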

\begin{proof}
We apply the Main Theorem with $T=T^A(n,d)$ and $S=S^A(n,d)$. The required symmetrizing form on $T$ exists by Lemma~\ref{PTSymm}, and the assumption $T^0=S^0$ comes from (\ref{ET0S0}). For the element $\xi$ we take the element $\xi_\om\in S^A(n,d)^0\cong S^{A^0}(n,d)$, which by Corollary~\ref{CXiOm} satisfies the assumption (b) of the Main Theorem. 

To check the assumption (a) of the Main Theorem, note that, in the notation of \cite[(3.2),(5.4)]{KM2}, we have 
$\xi_\om=\xi_{12\cdots d,12\cdots d}^{1_{A}1_{A}\cdots1_{A}}$. Moreover, using the notation of \cite[\S2B,\,(3.2)]{KM2},
for any $(\bb,\br,\bs)\in\Seq^B(n,d)$, we have $\xi_\om\xi_{\br,\bs}^{\bb}=0$ unless, up to permutation, we have $r_1=1,\dots,r_d=d$, in particular $r_1,\dots,r_d$ are all distinct, whence $\xi_{\br,\bs}^{\bb}=\eta_{\br,\bs}^{\bb}$, see \cite[(3.9)]{KM2}. But $\eta_{\br,\bs}^{\bb}\in T^A(n,d)$. Since by \cite[Lemma 3.3]{KM2}, there is a basis of $S^A(n,d)$ consisting of  $\xi_{\br,\bs}^{\bb}$'s , the assumption (a) follows. 
\end{proof}

\begin{Corollary} %\label{}%{\rm \cite{}}%{\bf ()}
Let $d\leq n$. 
The subalgebras $T^{\Zig_\ell}(n,d)\subseteq S^{\Zig_\ell}(n,d)$ and $T^{\ZigB_\ell}(n,d)\subseteq S^{\ZigB_\ell}(n,d)$ are maximally symmetric.
\end{Corollary}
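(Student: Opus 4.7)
The plan is to apply Theorem~\ref{TTASA} directly to each of the two graded $\k$-algebras $A = \Zig_\ell$ and $A = \ZigB_\ell$. The only hypothesis of that theorem (beyond the standing assumption $d \leq n$) is that $A = A^0 \oplus A^1 \oplus A^2$ is a graded $\k$-algebra carrying a degree $-2$ symmetrizing form. Both algebras are defined in Examples~\ref{Ex1} and~\ref{Ex2} precisely so that they sit in degrees $0$, $1$, $2$, since the first defining relation in each presentation forces all paths of length three or greater to vanish; and each is equipped with the linear form described there, which takes the value $1$ on a prescribed basis of the top-degree component and vanishes on $A^0 \oplus A^1$.

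Thus the main content of the proof is the verification that these prescribed forms really are symmetrizing. For $\Zig_\ell$, one checks that with respect to the natural path basis, the only products contributing to degree $2$ are $\ze_i \zc_i = \zc_i \ze_i = \zc_i$ and $\za_{i,j}\za_{j,i} = \zc_{\max(i,j)}$ (up to the identification of length-two cycles based at the same vertex). This makes the Gram matrix of $(\cdot,\cdot)_\t$ on $\Zig_\ell$, read in the obvious bigrading by vertex pairs, a permutation matrix pairing $\ze_i \leftrightarrow \zc_i$ and $\za_{i,j} \leftrightarrow \za_{j,i}$, hence perfect and symmetric. The argument for $\ZigB_\ell$ is essentially the same, with the additional point that the loop $\zu$ at vertex $0$ obeys the relation $\zu^2 = \tilde\za_{0,1}\tilde\za_{1,0}$, so that $\tilde\zc_0 = \zu^2$ is a single well-defined basis element of $(\ZigB_\ell^2)\tilde\ze_0$, paired perfectly with $\tilde\ze_0$ by $\t$.

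With the symmetrizing forms confirmed, the hypotheses of Theorem~\ref{TTASA} are satisfied for both $A = \Zig_\ell$ and $A = \ZigB_\ell$, and the conclusion that $T^{\Zig_\ell}(n,d) \subseteq S^{\Zig_\ell}(n,d)$ and $T^{\ZigB_\ell}(n,d) \subseteq S^{\ZigB_\ell}(n,d)$ are maximally symmetric follows immediately. There is no substantive obstacle here: the corollary is a packaging statement, and the real work has already been done in Theorem~\ref{TTASA} (and in the quasi-unit construction of Corollary~\ref{CXiOm} that powers it). The only place where care is needed is in the Brauer-tree algebra $\ZigB_\ell$, where the identification $\zu^2 = \tilde\za_{0,1}\tilde\za_{1,0}$ must be respected when listing a basis of the top degree so that the Gram matrix is verified to be non-degenerate exactly once.
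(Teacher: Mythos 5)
Your proposal is correct and matches the paper's (implicit) argument: the Corollary is a direct application of Theorem~\ref{TTASA} to $A=\Zig_\ell$ and $A=\ZigB_\ell$, whose status as graded algebras in degrees $0,1,2$ with degree $-2$ symmetrizing forms is supplied by Examples~\ref{Ex1} and~\ref{Ex2}, and you correctly identify the verification of those forms as the only content to check. One small slip in your sketch: with the conventions of Example~\ref{Ex1} (where $\zc_j:=\za_{j,j-1}\za_{j-1,j}$ is the cycle based at $j$) one has $\za_{i,j}\za_{j,i}=\zc_i$, not $\zc_{\max(i,j)}$; this does not affect the conclusion, since either way the Gram matrix pairs $\ze_i\leftrightarrow\zc_i$ and $\za_{i,j}\leftrightarrow\za_{j,i}$ and is therefore a perfect pairing.
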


\begin{Remark} %\label{}%{\rm \cite{}}%{\bf ()}
{\rm 
The Main Theorem also implies Theorem 6.6 of \cite{EK2}, by which it was motivated. That theorem claims that the Turner's double algebra $D^A(n,d)$ is a maximally symmetric subalgebra of the divided power version ${}'D^A(n,d)$ of Turner's double algebra. The proof is similar to that of  Theorem~\ref{TTASA} but uses the standard grading of ${}'D^A(n,d)$ as defined in \cite[\S4.5]{EK1} and the symmetrizing form on $D^A(n,d)$ defined in \cite[\S4.6]{EK1}. 
}
\end{Remark}

\end{document}